\def\ps@headings{%
\def\@oddhead{\mbox{}\scriptsize\rightmark \hfil \thepage}%
\def\@evenhead{\scriptsize\thepage \hfil \leftmark\mbox{}}%
\def\@oddfoot{}%
\def\@evenfoot{}}
\newcommand{\field}[1]{\mathbb{#1}}
\def\l{\lambda}
\def\A{\mathcal{A}}
\def\1{\mathbf{1}}
\newtheorem{corollary}{Corollary}
\newtheorem{proposition}{Proposition}
\begin{document}

%
%

\title{\vspace{.1in}
\huge Accelerated Backpressure Algorithm  \thanks{This research is supported by Army Research Lab MAST
Collaborative Technology Alliance, AFOSR complex networks program, ARO P-57920-NS, NSF CAREER CCF-0952867, and NSF CCF-1017454, ONR MURI N000140810747 and NSF-ECS-0347285.}}

\author{Michael Zargham$^{\dagger}$, Alejandro Ribeiro$^{\dagger}$, Ali Jadbabaie$^{\dagger}$ \thanks{$^{\dagger}$Michael Zargham, Alejandro Ribeiro and Ali Jadbabaie are with the Department of Electrical and Systems Engineering, University of Pennsylvania.}}

\maketitle

\begin{abstract}
We develop an Accelerated Back Pressure (ABP) algorithm using Accelerated Dual Descent (ADD), a distributed approximate Newton-like algorithm that only uses local information. Our construction is based on writing the backpressure algorithm as the solution to a network feasibility problem solved via stochastic dual subgradient descent.  We apply stochastic ADD in place of the stochastic gradient descent algorithm.  We prove that the ABP algorithm guarantees stable queues.  Our numerical experiments demonstrate a significant improvement in convergence rate, especially when the packet arrival statistics vary over time.
\end{abstract}

%
\section{Introduction}
This paper considers the problem of joint routing and scheduling in packet networks. Packets are accepted from upper layers as they are generated and marked for delivery to intended destinations. To accomplish delivery of information nodes need to determine routes and schedules capable of accommodating the generated traffic. From a node-centric point of view, individual nodes handle packets that are generated locally as well as packets received from neighboring nodes. The goal of each node is to determine suitable next hops for each flow conducive to successful packet delivery.

A joint solution to this routing and scheduling problem is offered by the backpressure (BP) algorithm \cite{backpress}. In BP nodes keep track of the number of packets in their local queues for each flow and share this information with neighboring agents. The differences in the number of queued packets at two neighboring terminals are computed for all flows and the transmission capacity of the link is assigned to the flow with the largest queue differential. We can interpret this algorithm by identifying queue differentials with pressure to send packets on a link. Regardless of interpretation and despite its simplicity, BP can be proved to be an optimal policy if given arrival rates can be supported by some routing-scheduling policy, they can be supported by BP. Notice that BP relies on queue lengths only and does not need knowledge or estimation of packet arrival rates. It is also important to recognize that BP is an iterative algorithm. Packets are initially routed more or less at random but as queues build throughout the network suitable routes and schedules are eventually learned. 

A drawback of BP is the slow convergence rate of this iterative process for route discovery. This is better understood through an alternative interpretation of BP as a dual stochastic subgradient descent algorithm \cite{crosslayer, neelymod}. Joint routing and scheduling can be formulated as the determination of per-flow routing variables that satisfy link capacity and flow conservation constraints. In this model the packet transmission rates are abstracted as continuous variables. To solve the resulting feasibility problem we associate Lagrange multipliers with the flow conservation constraints and proceed to implement subgradient descent in the dual domain. This solution methodology leads to distributed implementations and for that reason is commonly utilized to find optimal operating points of wired \cite{kelly, low, Srikant} and wireless communication networks \cite{mung, energy, sep}. More important to the discussion here, the resulting algorithm turns out equivalent to BP with queue lengths taking the place of noisy versions of the corresponding dual variables. The slow convergence rate of BP is thus expected because the convergence rate of subgradient descent algorithms is logarithmic in the number of iterations \cite{ratejournal}. Simple modifications can speed up the convergence rate of BP by rendering it equivalent to stochastic gradient descent \cite{sbp} -- as opposed to stochastic subgradient descent. Nevertheless, the resulting linear convergence rate is still a limiting factor in practice.

To speed up the convergence rate of BP we need to incorporate information on the curvature of the dual function. This could be achieved by using Newton's method, but the determination of dual Newton steps requires coordination among all nodes in the network. To overcome this limitation, methods to determine approximate Newton steps in a distributed manner have been proposed. Early contributions on this regard are found in \cite{BeG83, cgNewt}. Both of these. Both of these methods, however, are not fully distributed because they require some level of global coordination. Efforts to overcome this shortcoming include approximating the Hessian inverse with the inverse of its diagonals \cite{lowdiag}, the use of consensus iterations to approximate the Newton step \cite{cdc09, wei},  and the accelerated dual descent (ADD) family of algorithms that uses a Taylor's expansion of the Hessian inverse to compute approximations to the Newton step \cite{mz12}. Members of the ADD family are indexed by a parameter $N$ indicating that local Newton step approximations are constructed at each node with information gleaned from agents no more than $N$ hops away. Algorithms ADD-$N$ were proven to achieve quadratic convergence rate and demonstrated to reduce the time to find optimal operating points -- as measured by the number of communication instances -- by two orders of magnitude with respect to regular subgradient descent. It has been shown that stochastic ADD converges for the Network flow optimization in \cite{ADDstoch}.The goal of this paper is to adapt the ADD family of algorithm to develop variations of BP that achieve quadratic convergence.

We start the paper introducing the problem of stabilizing queues in a communication network and review the backpressure algorithm, \cite{backpress} used to solve this problem (Section \ref{prelim}).  We proceed to demonstrate that the backpressure algorithm is equivalent to stochastic subgradient descent (Section \ref{ssgd}).  Once in the optimization framework we review the generalization to soft backpressure (Section \ref{ssbp}).  In Section \ref{abps}, we construct the accelerated backpressure algorithm, which is an approximation of Newton's method using local information and limited communication with neighboring nodes. In Section \ref{analysis}, we prove that the accelerated backpressure algorithm stabilizes the queues. In Section \ref{sim}, we present numerical experiments which demonstrate the performance gains associated with implementing the accelerated backpressure algorithm as compared to the backpressure algorithm from \cite{back press} and soft backpressure algorithm from \cite{sbp}.

The main contribution of this work is the introduction of a locally computable second order method for solving the queue stabilization problem.  This is particular relevant for cases where the packet arrival statistics vary with time. As shown in Figure \ref{queues}, the accelerated backpressure algorithm can effectively stabilize queues in networks whose arrival rates vary at higher frequencies than its first order parent algorithms.
%
\begin{algorithm}[t]\label{BPalg}{\small
\caption{Backpressure at node $i$}
\For{$t = 0, 1, 2, \cdots$} {
Observe local queue lengths $\{q^k_i(t)\}_{k}$ for all flows $k$\\
\For{all neighbors $j\in n_i$}{ 
Send queue lengths $\{q^k_i(t)\}_{k}$ -- Receive lengths $\{q^k_j(t)\}_{k}$\\
Determine flow with largest pressure:
$$k_{ij}^*=\argmax_k \left[q_i^k(t)-q_j^k(t)\right]^+ $$\\
Set routing variables to $r_{ij}^{k}(t) = 0$ for all $k\neq k_{ij}^*$ and
  $$r_{ij}^{k_{ij}^*} (t)= C_{ij} \ind{q_i^{k_{ij}^*}(t)-q_j^{k_{ij}^*}(t)>0} $$
Transmit $r_{ij}^{k_{ij}^*}(t)$ packets for flow $k_{ij}^*$}
}}
\end{algorithm}
%

\section{ Preliminaries}\label{prelim}
Consider a given a network $\mathcal{G}= \{\mathcal{V}, \mathcal{E}\}$ where $\mathcal{V}$ is the set of nodes and $\mathcal{E}\subseteq \mathcal{V}\times\mathcal{V}$ is the set of links between nodes. Denote as $C_{ij}$ the capacity of link $(i,j)\in \mathcal{E}$ and define the neighborhood of $i$ as the set $n_i=\{j\in \mathcal{V} | (i,j)\in \mathcal{E}\}$ of nodes $j$ that can communicate directly with $i$. There is also a set of information flows $\ccalK $ with the destination of flow $k\in\ccalK $ being the node $o_k\in \mathcal{V}$. 

At time index $t$ terminal $i\neq o_k$ generates a random number $a_i^k(t)$ of units of information to be delivered to $o_k$. The random variables $a_i^k(t)\geq0$ are assumed independent and identically distributed across time with expected value $\E{a_i^k(t)}=a_i^k$. In the same time slot node $i$ routes $r_{ij}^k(t)\geq0$ units of information through neighboring node $j\in n_i$ and receives $r_{ji}^k(t)\geq0$ packets from neighbor $j$. The difference between the total number of received packets $a_i^k(t) + \sum_{j\in n_i} r_{ji}^k(t) $ and the sum of transmitted packets $\sum_{j\in n_i} r_{ij}^k(t)$ is added to the local queue -- or subtracted if this quantity is negative. Therefore, the number $q_i^k(t)$ of $k$-flow packets queued at node $i$ evolves according to 
\begin{equation}\label{eqn_queue_evolution}
   q_i^k(t+1) = \bigg[ q_i^k(t) + a_i^k(t) + \sum_{j\in n_i} 
       r_{ji}^k(t) - r_{ij}^k(t)\bigg]^+ ,
\end{equation}
where the projection $[\cdot]^+$ into the nonnegative reals is necessary because the number of packets in queue cannot become negative. We remark that \eqref{eqn_queue_evolution} is stated for all nodes $i\neq o_k$ because packets routed to their destinations are removed from the system.

To ensure packet delivery it is sufficient to guarantee that all queues $q_i^k(t+1)$ remain stable. In turn, this can be guaranteed if the average rate at which packets exit queues do not exceed the rate at which packets are loaded into them. To state this formally observe that the time average limit of arrivals staisfies $\lim_{t\to\infty}a_i^k(t)=\E{a_i^k(t)} := a_i^k$ and define the ergodic limit $r_{ij}^k:=\lim_{t\to\infty}r_{ij}^k(t)$. If the processes $r_{ij}^k(t)$ controlling the movement of information through the network are asymptotically stationary, queue stability follows if \footnote{Stability is guaranteed only if the inequalities hold in a strict sense, i.e., $\sum_{j\in n_i} r_{ij}^k - r_{ji}^k > a_i^k $. Equality is allowed here to facilitate connections with optimization problems to be considered later on.}
\begin{equation}\label{dconstraint}
   \sum_{j\in n_i} r_{ij}^k - r_{ji}^k \geq a_i^k \quad  \forall\ k, i \neq o_k.
\end{equation}
For future reference define the vector $r:=\{r_{ij}^k\}_{k,i\not= o_k,j}$ grouping variables $r_{ij}^k$ for all flows and links.  Since at most $C_{ij}$ packets can be transmitted in the link $(i,j)$ the routing variables $r_{ij}^k(t)$ must satisfy
\begin{equation}\label{eqn_capacity_constraint}
   \sum_{k} r_{ij}^k (t) \leq C_{ij}.
\end{equation}
The joint routing and scheduling problem can be now formally stated as the determination of nonnegative variables $r_{ij}^k (t)\geq0$ that satisfy \eqref{eqn_capacity_constraint} for all times $t$ and whose time average limits $r_{ij}^k$ satisfy \eqref{dconstraint}.
The BP algorithm solves this problem by assigning all the capacity of the link $(i,j)$ to the flow with the largest queue differential $q_i^k(t)-q_j^k(t)$. Specifically, for each link we determine the flow pressure
\begin{equation}\label{eqn_queue_pressure}
   k_{ij}^*=\argmax_k \left[q_i^k(t)-q_j^k(t)\right]^+ .
\end{equation}
If the maximum pressure $\max_k\left[q_i^k(t)-q_j^k(t)\right]^+>0$ is strictly positive we set $r_{ij}^k (t)=C_{ij}$ for $k=k_{ij}^*$. Otherwise the link remains idle during the time frame. The resulting algorithm is summarized in Algorithm \ref{BPalg}. The backpressure algorithm works by observing the queue differentials on each link and then assigning the capacity for each link to the data type with the largest positive queue differential, thus driving the time average of the queue differentials to zero-- stabilizing the queues.  For the generalizations developed in this paper it is necessary to reinterpret BP as a dual stochastic subgradient descent as we do in the following section.
%

\subsection{Dual stochastic subgradient descent} \label{ssgd}
Since the parameters that are important for queue stability are the time averages $r_{ij}^k$ of the routing variables $r_{ij}^k(t)$ an alternative view of the joint routing and scheduling problem is the determination of variables $r_{ij}^k$ satisfying \eqref{dconstraint} and $\sum_{k} r_{ij}^k \leq C_{ij}$. This can be formulated  as the solution of an optimization problem. Let $f_{ij}^k(r_{ij}^k)$ be arbitrary concave functions on $\field{R}_+$ and consider the optimization problem 
\begin{alignat}{4} \label{bpopt}
    r^* := 
    &\argmax\ && \sum_{k,i\neq o_i, j} f_{ij}^k(r_{ij}^k)  \\
    &\st      && \sum_{j\in n_i} r_{ij}^k - r_{ji}^k \ge a_i^k, \quad &&\forall\ k, i \neq o_k, \nonumber\\
              &&&\sum_{k\in\ccalK} r_{ij}^k \le C_{ij},  \quad &&\forall\ (i,j)\in\mathcal{E}.  \nonumber
\end{alignat}
where the optimization is over nonnegative variables $r_{ij}^k\geq0$. Since only feasibility is important for queue stability, solutions to \eqref{bpopt} ensure stable queues irrespectively of the objective functions $f_{ij}^k(r_{ij}^k)$.

Since the problem in \eqref{bpopt} is concave it can be solved by descending on the dual domain. Start by associating multipliers $\lambda_i^k$ with the constraint $\sum_{j\in n_i} r_{ij}^k - r_{ji}^k \ge a_i^k$ and keep the constraint $\sum_{k} r_{ij}^k \leq C_{ij}$ implicit. The corresponding Lagrangian associated with the optimization problem in \eqref{bpopt} is
\begin{equation} \label{lagrange}
   \ccalL (r, \lambda) = \sum_{k,  i\not= o_k, j} f_{ij}^k(r_{ij}^k) + \sum_{k,  i\not= o_k} \lambda_i^k
       \bigg(\sum_{j\in n_i} r_{ij}^k - r_{ji}^k - a_i^k\bigg)
\end{equation}
where we introduced the vector $\lambda:=\{\lambda_{i}^k\}_{k, i\not= o_k}$ grouping variables $\lambda_{i}^k$ for all flows and nodes. The corresponding dual function is defined as $h(\lambda):=\max_{\sum_{k} r_{ij}^k \leq C_{ij}} \ccalL (r, \lambda)$. 

To compute a descent direction for $h(\lambda)$ define the primal Lagrangian maximizers for given $\lambda$ as $r_{ij}^k(\lambda):=\argmax_{\sum_{k} r_{ij}^k \leq C_{ij}} \ccalL (r, \lambda)$. A descent direction for the dual function is available in the form of the dual subgradient whose components $\tdg_i^k(\lambda)$ are obtained by evaluating the constraint slack associated with the Lagrangian maximizers
\begin{align}\label{eqn_subgradient}
    \tdg_i^k(\lambda):=\sum_{j\in n_i} r_{ij}^k(\lambda) - r_{ji}^k(\lambda) - a_i^k.
\end{align}
Since the Lagrangian $\ccalL (r, \lambda)$ in \eqref{lagrange} is linear in the routing variables $r_{ij}^k$ the determination of the maximizers $r_{ij}^k(\lambda):=\argmax_{\sum_{k} r_{ij}^k \leq C_{ij}} \ccalL (r, \lambda)$ can be decomposed into the maximization of separate summands. Considering the coupling constraints $\sum_{k} r_{ij}^k \leq C_{ij}$ it suffices to consider variables $\{r_{ij}^k\}_k$ for all flows across a given link. After reordering terms it follows that we can compute $r_{ij}^k(\lambda)$ as
\begin{alignat}{3}
   r_{ij}^k(\lambda)  \label{eqn_primal_maximizers}
      = &\argmax\ && \sum_k f_{ij}^k(r_{ij}^k)+ r_{ij}^k\big(\lambda_i^k-\lambda_j^k\big)\\ 
        & \st     && \sum_{k\in \ccalK } r_{ij}^k \le C_{ij}. \nonumber
\end{alignat}
Introducing a time index $t$, subgradients $\tdg_i^k(\lambda(t))$ could be computed using \eqref{eqn_subgradient} with Lagrangian maximizers $r_{ij}^k(\lambda(t))$ given by \eqref{eqn_primal_maximizers}. A subgradient descent iteration could then be defined to find the variables $r^*$ that solve \eqref{bpopt}; see e.g., \cite{wireless}. 

The problem in computing $\tdg_i^k(\lambda)$ is that we don't know the average arrival rates $a_i^k$. We do observe, however, the instantaneous rates $a_i^k(t)$ that are known to satisfy $\E{a_i^k(t)} = a_i^k$. Therefore,
\begin{align}\label{eqn_stoch_subgradient}
    g_i^k(\lambda):=\sum_{j\in n_i} r_{ij}^k(\lambda) - r_{ji}^k(\lambda) - a_i^k(t),
\end{align}
is a stochastic subgradient of the dual function in the sense that its expected value $\E{g_i^k(\lambda)} = \tdg_i^k(\lambda)$ is the subgradient defined in \eqref{eqn_subgradient}. We can then minimize the dual function using a stochastic subgradient descent algorithm. At time $t$ we have multipliers $\lambda(t)$ and determine Lagrangian maximizers $r_{ij}^k(t):= r_{ij}^k(\lambda(t))$ as per \eqref{eqn_primal_maximizers}. We then proceed to update multipliers along the stochastic subgradient direction according to
\begin{equation}\label{eqn_stoch_subg_descent}
   \lambda_i^k(t+1) = \bigg[ \lambda_i^k(t) - 
      \eps\bigg(\sum_{j\in n_i} r_{ij}^k(t) - r_{ji}^k(t) - a_i^k(t)\bigg)\bigg]^+, 
\end{equation}
where $\eps$ is a constant stepsize chosen small enough so as to ensure convergence; see e.g., \cite{sbp}.

Properties of the descent algorithm in \eqref{eqn_stoch_subg_descent} vary with the selection of the functions $f_{ij}^k(r_{ij}^k)$. Two cases of interest are when $f_{ij}^k(r_{ij}^k)=0$ and when $f_{ij}^k(r_{ij}^k)$ are continuously differentiable, strongly convex, and monotone decreasing on $\field{R}_+$ but otherwise arbitrary. The former allows an analogy with the backpressure as given by Algorithm \ref{BPalg} while the latter leads to a variation termed soft backpressure. 
%
\begin{algorithm}[t]\label{BPalg_as_subgradient_descent}{\small
\caption{Dual stochastic subgradient descent}
\For{$t = 0, 1, 2, \cdots$} {
\For{all neighbors $j\in n(i)$}{ 
Send multipliers $\{\l^k_i(t)\}_{k}$ -- Receive multipliers $\{\l^k_j(t)\}_{k}$\\
Determine flow with largest dual variable differential:
$$k_{ij}^*=\argmax_k \left[\lambda_i^k(t)-\lambda_j^k(t)\right]^+ $$\\
Set routing variables to $r_{ij}^{k}(t) = 0$ for all $k\neq k_{ij}^*$ and
  $$r_{ij}^{k_{ij}^*} (t)= C_{ij} \ind{\lambda_i^{k_{ij}^*}(t)-\lambda_j^{k_{ij}^*}(t)>0} $$
Transmit $r_{ij}^{k_{ij}^*}(t)$ packets for flow $k_{ij}^*$}
Send variables $\{r^k_{ij}(t)\}_{kj}$ -- Receive variables $\{r^k_{ji}(t)\}_{kj}$\\
Update multipliers $\{\l^k_i(t)\}_{k}$ along stochastic subgradient
\[\l^k_i(t+1)=\bigg[ \lambda_i^k(t) -\eps\bigg(\sum_{j\in n(i)} r_{ij}^k(t) + r_{ji}^k(t) + a_i^k(t)\bigg)\bigg]^+\] }}
\end{algorithm}
\subsection{Backpressure as stochastic subgradient descent} 
The classical backpressure algorithm, \cite{backpress} can be recovered by setting $f_{ij}^k(r_{ij}^k)=0$ for all links flows $k$ and links $(i,j)$. With this selection the objective to be maximized in \eqref{eqn_primal_maximizers} becomes $\sum_kr_{ij}^k\left(\lambda_i^k(t)-\lambda_j^k(t)\right)$. To solve this maximization it suffices to find the flow with the largest dual variable differential 
\begin{equation}\label{eqn_multiplier_pressure}
   k_{ij}^*=\argmax_k \left[\lambda_i^k(t)-\lambda_j^k(t)\right]^+ .
\end{equation}
If the value of the corresponding maximum is nonpositive, i.e., $\max_k \left[\lambda_i^k(t)-\lambda_j^k(t)\right]^+\leq0$, all summands in \eqref{eqn_primal_maximizers} are nonpositive and the largest objective in \eqref{eqn_primal_maximizers} is attained by making $r_{ij}^k(t)=0$ for all flows $k$. Otherwise, since the sum of routing variables $r_{ij}^k(t)$ must satisfy $\sum_{k\in\ccalK}r_{ij}^k\le C_{ij}$ the maximum objective is attained by making $r_{ij}^k(t)= C_{ij}$ for $k = k_{ij}^*$ and $r_{ij}^k(t)=0$ for all other $k$.

{The algorithm that follows from the solution of this maximization is summarized in Algorithm \ref{BPalg_as_subgradient_descent}.}  The dual stochastic subgradient descent is implemented using node level protocols.  At each time instance nodes send their multipliers $\l_i^k(t)$ to their neighbors.  After receiving multiplier information from its neighbors, each node can compute the multiplier differentials $\l_i^k(t) -\l_j^k(t)$ for each edge.  The node then allocates the full capacity of each outgoing link to which ever commodity has the largest differential.  Once the transmission rates are set each node can observe its net packet gain which is equivalent to the stochastic gradient as defined in \eqref{eqn_stoch_subgradient}.  Finally, each node updates its multipliers but subtracting $\epsilon$ times ties stochastic subgradient from its current multipliers. Choosing the stepsize $\epsilon=1$ causes the multipliers to coincide with the queue lengths for all time.  

Comparing \eqref{eqn_queue_pressure} with \eqref{eqn_multiplier_pressure} we see that the assignments of flows to links are the same if we identify multipliers $\lambda_i^k(t)$ with queue lengths. Furthermore, if we consider the Lagrangian update in \eqref{eqn_stoch_subg_descent} with $\eps=1$ we see that they too coincide. Thus, we can think of backpressure as a particular case of stochastic subgradient descent. From that point of view algorithms \ref{BPalg} and \ref{BPalg_as_subgradient_descent} are identical. The only difference is that since queues take the place of multipliers the update in Step 8 of Algorithm \ref{BPalg_as_subgradient_descent} is not necessary in Algorithm \ref{BPalg}.
%
%
\begin{algorithm}[t]{\small \caption{Soft Backpressure at node $i$}\label{SBPalg}
\For{$t = 0, 1, 2, \cdots$} {
\For{all neighbors $j\in n(i)$}{ 
Send multipliers $\{\l^k_i(t)\}_{k}$ -- Receive multipliers $\{\l^k_j(t)\}_{k}$\\
Compute $\mu_{ij}$ such that \[\sum_k [\lambda_i^k(t)-\lambda_j^k(t)-\mu_{ij}]^+\] \\
Transmit packets at rate \[r_{ij}^{k}(t)= F(- [\lambda_i^k(t)-\lambda_j^k(t)-\mu_{ij}]^+) \] \\}
Send variables $\{r^k_{ij}(t)\}_{kj}$ -- Receive variables $\{r^k_{ji}(t)\}_{kj}$\\
Update multipliers $\{\l^k_i(t)\}_{k}$ along stochastic subgradient
\[\l^k_i(t+1)=\bigg[ \lambda_i^k(t) -\eps\bigg(\sum_{j\in n_i} r_{ij}^k(t) - r_{ji}^k(t) - a_i^k(t)\bigg)\bigg]^+\] }}
\end{algorithm}
%
%
\subsection{Soft backpressure} \label{ssbp} Assume now that the functions $f_{ij}^k(r_{ij}^k)$ are continuously differentiable, strongly convex, and monotone decreasing on $\field{R}_+$ but otherwise arbitrary. In this case the derivatives $\partial f_{ij}^k(x)/\partial x$ of the functions $f_{ij}^k(x)$ are monotonically increasing and thus have inverse functions that we denote as 
\begin{equation}\label{F}
   F_{ij}^k(x) := \left[\partial f_{ij}^k(x)/\partial x\right]^{-1} (x) .
\end{equation}
The Lagrangian maximizers in \eqref{eqn_primal_maximizers} can be explicitly written in terms of the derivative inverses $F_{ij}^k(x)$. Furthermore, the maximizers are unique for all $\lambda$ implying that the dual function is differentiable. We detail these two statements in the following proposition.
\begin{proposition}\label{prop_explicit_primal_maximizers}
If the functions $f_{ij}^k(r_{ij}^k)$ in \eqref{bpopt} are continuously differentiable, strongly convex, and monotone decreasing on $\field{R}_+$, the dual function $h(\lambda):=\max_{\sum_{k} r_{ij}^k \leq C_{ij}} \ccalL (r, \lambda)$ is differentiable for all $\lambda$. Furthermore, the gradient component along the $\lambda_i^k$ direction is $g_i^k(\lambda)$ as defined in \eqref{eqn_stoch_subgradient} with 
\begin{equation}\label{primal} 
   r_{ij}^k(\lambda) = F_{ij}^k\left(-\left[\lambda_i^k-\lambda_j^k - {\mu_{ij}(\lambda)}\right]^+\right). 
\end{equation}
where $\mu_{ij}(\lambda)$ is either $0$ if $\sum_k F_{ij}^k\big(-\big[\lambda_i^k-\lambda_j^k \big]^+\big)\leq C_{ij}$ or chosen as the solution to the equation
\begin{equation}\label{mu1}
   \sum_k F_{ij}^k\left(-\left[\lambda_i^k-\lambda_j^k - 
        \mu_{ij}(\lambda)\right]^+\right) = C_{ij}.
\end{equation}
\end{proposition}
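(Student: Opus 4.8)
The plan is to establish the explicit form of the Lagrangian maximizers first and to read off differentiability of $h$ as a byproduct. The key observation for the differentiability claim is that, for fixed $r$, the Lagrangian $\ccalL(r,\lambda)$ in \eqref{lagrange} is affine in $\lambda$, so $h(\lambda)$ is a supremum of affine functions of $\lambda$ over the feasible set $\{r_{ij}^k\ge 0,\ \sum_k r_{ij}^k\le C_{ij}\ \forall (i,j)\}$, which is compact and independent of $\lambda$. By Danskin's theorem such a function is differentiable at any $\lambda$ where the maximizer is unique, and there $\partial h(\lambda)/\partial\lambda_i^k$ equals $\partial\ccalL/\partial\lambda_i^k$ evaluated at that maximizer, i.e. the constraint residual $\sum_{j\in n_i}r_{ij}^k(\lambda)-r_{ji}^k(\lambda)-a_i^k$ claimed in the statement (cf. \eqref{eqn_subgradient}). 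So it suffices to show the maximizer is unique for every $\lambda$ and to compute it.

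Uniqueness and the closed form both follow from the per-link decomposition established before \eqref{eqn_primal_maximizers}: the maximization separates across links, so one only needs to analyze each scalar-per-flow problem \eqref{eqn_primal_maximizers}. Its objective is a sum of the functions $f_{ij}^k$, which are strictly concave by hypothesis (strong concavity), plus linear terms in $r_{ij}^k$; hence it is strictly concave, and the feasible set is nonempty, compact and convex, so the maximizer exists and is unique. Combined with the separation this gives a unique global maximizer, and hence differentiability of $h$. To identify it, write the KKT conditions of \eqref{eqn_primal_maximizers} with a multiplier $\mu_{ij}\ge 0$ for $\sum_k r_{ij}^k\le C_{ij}$ and the sign conditions for $r_{ij}^k\ge 0$: stationarity is $\partial f_{ij}^k(r_{ij}^k)/\partial r_{ij}^k+(\lambda_i^k-\lambda_j^k)-\mu_{ij}\le 0$, with equality wherever $r_{ij}^k>0$. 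On the flows with $\lambda_i^k-\lambda_j^k-\mu_{ij}>0$ this inverts, via the inverse derivative $F_{ij}^k$ of \eqref{F}, to $r_{ij}^k=F_{ij}^k(-(\lambda_i^k-\lambda_j^k-\mu_{ij}))$; on the remaining flows the stationarity residual is strictly negative and complementary slackness forces $r_{ij}^k=0$. Both cases are captured at once by \eqref{primal}, once $F_{ij}^k$ is understood to return $0$ at nonnegative arguments (equivalently, the stationary value is projected onto $\field{R}_+$).

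It then remains to pin down $\mu_{ij}(\lambda)$. If $\mu_{ij}=0$ already produces $\sum_k F_{ij}^k(-[\lambda_i^k-\lambda_j^k]^+)\le C_{ij}$, then primal feasibility and complementary slackness both hold and this is the correct value. Otherwise the capacity constraint must be active, $\sum_k r_{ij}^k=C_{ij}$, which is exactly \eqref{mu1}; one then checks that the left-hand side of \eqref{mu1} is a continuous, strictly monotone function of $\mu_{ij}\ge 0$ — each summand is monotone because $\mu_{ij}\mapsto-[\lambda_i^k-\lambda_j^k-\mu_{ij}]^+$ is monotone and $F_{ij}^k$ is monotone — which exceeds $C_{ij}$ at $\mu_{ij}=0$ and decreases past it, so the intermediate value theorem supplies a unique solution.

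The step I expect to require the most care is the KKT bookkeeping: verifying that whatever complementary-slackness pattern the nonnegativity constraints produce always collapses into the single expression \eqref{primal} with the $[\,\cdot\,]^+$, and confirming the monotonicity and range statements needed for \eqref{mu1}. This in turn hinges on matching the domain and range of $F_{ij}^k$ to the hypotheses on $f_{ij}^k$ — in particular that the nonpositive arguments $-[\,\cdot\,]^+$ lie in the domain of $F_{ij}^k$ and that $F_{ij}^k(0)=0$, i.e. zero pressure yields zero rate. Once this is settled, the differentiability claim follows immediately from Danskin's theorem with no further computation.
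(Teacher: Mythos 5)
Your proposal follows essentially the same route as the paper: KKT analysis of the per-link subproblem \eqref{eqn_primal_maximizers} with the capacity constraint dualized via a multiplier $\mu_{ij}$, inversion of the stationarity condition through $F_{ij}^k$ to get \eqref{primal}, and differentiability of $h$ deduced from uniqueness of the Lagrangian maximizer. Your version is in fact tighter in the places where the paper is loose --- you obtain uniqueness from strict concavity rather than the paper's (false as stated) claim that convex problems have unique maximizers, you justify the gradient formula via Danskin's theorem, and you supply the monotonicity/intermediate-value argument for the existence of $\mu_{ij}$ solving \eqref{mu1}, all of which the paper merely asserts.
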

\begin{proof} The dual problem $\min_\l h(\l)$ is convex because the primal \eqref{bpopt} is convex.  Convex optimization problems have unique maximizers. In our case, the dual gradient comes from differentiating \eqref{lagrange} with respect to $\l_i^k$ which yields \eqref{eqn_stoch_subgradient}. Substituting the unique maximizers we have the unique dual gradient.  Since $h(\lambda)$ has a unique gradient, it is differentiable.

In order to find the previously mentioned unique maximizers we consider the primal optimization \eqref{eqn_primal_maximizers}.  We dualize the additional constrain to get the extended Lagrangian
\begin{equation}
\bar{\mathcal{L}}(r, \mu) = \sum_k f(r_{ij}^k)+ r_{ij}^k(\l_i^k-\l_j^k) +\mu_{ij} (C_{ij}-\sum_k r_{ij}^k) \label{softl}.
\end{equation}
Considering the KarushÐKuhnÐTucker (KKT) optimality conditions as defined in \cite{boydbook}[Section 5.5] for \eqref{softl} yields the equations
\begin{eqnarray}
f'(r_{ij}^k) &=& -\left[\l_i^k-\l_j^k - \mu_{ij}\right]^+ \label{prim1}\\
\sum_k r_{ij}^k&\le& C_{ij} \label{prim2}
\end{eqnarray}
for all $(i,j)\in \mathcal{E}$. Applying the definition of $F_{ij}^k(\cdot)$ from equation \eqref{F} to \eqref{prim1} we get the desired relation in \eqref{primal}.  It remains to enforce \eqref{prim2} by selection of $\mu_{ij}$ which gives us condition \eqref{mu1}.  The assertion that $\mu_{ij}=0$ when $\sum_k F_{ij}^k\big(-\big[\lambda_i^k-\lambda_j^k \big]^+\big)\leq C_{ij}$ hold by the principal of complementary slackness, detailed in \cite{boydbook}[Section 5.5].
\end{proof}
%
While \eqref{mu1} does not have a closed for solution it can be computed quickly numerically using a binary search because it is a simple single variable root finding problem.  Computation time cost remains small compared to communication time cost.  

This yields the Algorithm \ref{SBPalg}.  Sott backpressureis implemented using node level protocols.  At each time instance nodes send their multipliers $\l_i^k(t)$ to their neighbors.  After receiving multiplier information from its neighbors, each node can compute the multiplier differentials $\l_i^k(t) -\l_j^k(t)$ for each edge.  The nodes then solve for $\mu_{ij}$ on each of its outgoing edges by using a rootfinder to solve the local constraint in \eqref{mu1},  The capacity of each edge is then allocated to the commodities via reverse waterfilling as defined in \eqref{primal}. Once the transmission rates are set each node can observe its net packet gain which is equivalent to the stochastic gradient as defined in \eqref{eqn_stoch_subgradient}.  Finally, each node updates its multipliers but subtracting $\epsilon$ times ties stochastic subgradient from its current multipliers. Choosing the stepsize $\epsilon=1$ causes the multipliers to coincide with the queue lengths for all time.  

Algorithm \ref{SBPalg}, is called soft backpressure because rather than allocate all of an edges capacity to the data type with the largest pressure $\lambda_i^k-\lambda_j^k$, it divides the capacity among the different data types based on their respective pressures via inverse water filling, \cite{sbp}.   We further observe that soft backpressure is solved using the same information required of backpressure, the lengths of the queues for all data types on either end of the link whose flow you are computing.  This means that assuming a step size $\epsilon=1$, Algorithm \ref{SBPalg} can be implemented without computing the multipliers but rather by simply observing the queue lengths $q_i^k(t)$ which are equal to the dual variables $\lambda_i^k(t)$. 

An important difference between backpressure and soft backpressure is that the former is equivalent to stochastic {\it subgradient} descent whereas the latter is tantamount to stochastic {\it gradient} descent because the dual function is differentiable. This improves the average convergence rate from logarithmic -- expected distance to optimal variables decreasing like $c/t$ for some constant $c$ -- to linear -- expected distance to optimality proportional to $c^t$ for some $c$. Linear convergence is still not satisfactory, however, motivating the accelerated backpressure algorithm that we introduce in the following section.
%
%
%
\section{Accelerated Backpressure}\label{abps}
Backpressure type algorithms exhibit slow convergence because they are fundamentally first order methods.  In the centralized deterministic case we can speed up using the Newton method.  In our case, the stochastic nature of the problem is not an issue because the arrival rates $a_i^k$ do not appear in the Hessian 
\begin{equation}
H =  \left[\begin{array} {cccc} \label{hessblock}
H_{11} & H_{12} & \cdots & H_{1n}\\
H_{21} & \ddots & & \vdots  \\
\vdots & & \ddots &  \vdots\\
H_{n1} & \cdots & \cdots & H_{nn}
\end{array}\right] \end{equation}
where
\begin{equation}\label{hessele}
[H_{ij}]_{ks} =  \frac{\partial^2 \mathcal{L}(r(\lambda), \lambda) }{\partial \lambda_i^k \partial \lambda_j^s}.
\end{equation}
Assuming knowledge of $H$ and the gradient $g= \{g_i^k\}$ for all $i$ and $k$, the Newton algorithm is given by Algorithm \ref{newt}.
\begin{algorithm}[t]{\small
\caption{Newton's Method}\label{newt}
\For{$t = 0, 1, 2, \cdots$} {
\For{all nodes $j\in{n(i)}$}{
Send multipliers $\{\l_i^k(t)\}_k$ - Receive multipliers $\{\l_j^k(t)\}_k$\\
Compute $\mu_{ij}$ such that \[\sum_k [\lambda_i^k(t)-\lambda_j^k(t)-\mu_{ij}]^+\] \\
Transmit packets at rate \[r_{ij}^{k}(t)= F(- [\lambda_i^k(t)-\lambda_j^k(t)-\mu_{ij}]^+) \] \\
Send variables $\{r^k_{ij}(t)\}_{kj}$-- Receive variables $\{r^k_{ji}(t)\}_{kj}$ \\
Compute $H_{ij}=[\nabla^2 \mathcal{L}(\lambda, r(\lambda))]_{ij}$\\ 
}
Comptute stochastic subgradient $g_i=\{g_i^k\}_k$
\[g_i^k=\sum_{j\in n(i)} r_{ij}^k(t) - r_{ji}^k(t) - a_i^k(t)\]
 \\
\For{all nodes $j\in\mathcal{V}$}{
Send variables $H_{ij}$, $g_i$-- Receive variables $H_{ji}$, $g_j$\\
Compute Hessian inverse blocks $[H^{-1}]_{ij}$\\
}
Update multipliers $\l_i(t)=\{ \l_i^k(t) \}_k$ along the Newton direction
\[ \l_i(t+1)=\left[ \lambda_i(t) -\sum_j [H^{-1}]_{ij}g_j\right]^+ \] 
}
}
\end{algorithm}
%
%
\begin{proposition}
The Dual Hessian, $H(\lambda) =  \nabla^2 \mathcal{L}(r(\lambda), \lambda)$ is block sparse with respect to the graph $\mathcal{G}$:
\begin{equation}
H_{ij} = \mathbf{0} \qquad \forall i\not =j \hbox{ s.t } (i,j), (j,i)\not \in \mathcal{E} 
\end{equation}
and the diagonal blocks are $H_{ii}$ are positive semidefinite.
\label{prop}
\end{proposition}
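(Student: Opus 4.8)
The plan is to establish the two assertions separately and with short arguments. The block sparsity pattern will follow from the explicit description of the primal Lagrangian maximizers given in Proposition~\ref{prop_explicit_primal_maximizers}, while the positive semidefiniteness of the diagonal blocks will follow from convexity of the dual function $h$. Writing $h(\lambda):=\mathcal{L}(r(\lambda),\lambda)$, so that $H=\nabla^2 h(\lambda)$, I would use from the outset the fact, from Proposition~\ref{prop_explicit_primal_maximizers}, that the gradient component of $h$ along $\lambda_i^k$ is $g_i^k(\lambda)=\sum_{l\in n_i}r_{il}^k(\lambda)-r_{li}^k(\lambda)-a_i^k$, so that $[H_{ij}]_{ks}=\partial g_i^k(\lambda)/\partial\lambda_j^s$.

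For the sparsity pattern, the key structural fact I would isolate first is that the primal maximizer $r_{lm}^k(\lambda)=F_{lm}^k\big(-[\lambda_l^k-\lambda_m^k-\mu_{lm}(\lambda)]^+\big)$ from \eqref{primal} depends on the multiplier vectors at the two endpoints $l$ and $m$ only: the coupling term $\mu_{lm}(\lambda)$ is pinned down by the single-link equation \eqref{mu1}, whose data are just the differences $\{\lambda_l^k-\lambda_m^k\}_k$. Granting this, every summand of $g_i^k(\lambda)$ depends on $\lambda_j$ only when $j=i$ or $j\in n_i$, so $\partial g_i^k(\lambda)/\partial\lambda_j^s=0$ whenever $j\neq i$ and $j\notin n_i$; in particular $H_{ij}=\mathbf{0}$ in the stated case $(i,j),(j,i)\notin\mathcal{E}$. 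An equivalent route, useful as a cross-check, is to differentiate the first-order condition $f_{ij}'(r_{ij}^k(\lambda))=-[\lambda_i^k-\lambda_j^k-\mu_{ij}]^+$ via the implicit function theorem and read off a representation $H=-A(\nabla^2_{rr}\mathcal{L})^{-1}A^{\top}$, with $A$ the commodity-wise node-arc incidence matrix and $\nabla^2_{rr}\mathcal{L}$ diagonal (the $f_{ij}^k$ being separable); sparsity is then transparent since each arc variable $r_{lm}^k$ is incident to exactly the nodes $l$ and $m$.

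For the diagonal blocks I would observe that $\mathcal{L}(r,\lambda)$ in \eqref{lagrange} is affine in $\lambda$ for each fixed $r$, so that $h(\lambda)=\max_{\sum_k r_{ij}^k\le C_{ij}}\mathcal{L}(r,\lambda)$ is a pointwise maximum of affine functions of $\lambda$ and therefore convex; consequently $H=\nabla^2 h(\lambda)\succeq 0$ wherever it exists, and $H_{ii}$, being a principal submatrix of $H$, is positive semidefinite. In the incidence-matrix representation this simply reads $H_{ii}=-A_i(\nabla^2_{rr}\mathcal{L})^{-1}A_i^{\top}\succeq 0$, using that $\nabla^2_{rr}\mathcal{L}\prec 0$ under the assumptions of Section~\ref{ssbp}. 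The arithmetic in all of this is routine; the one step that genuinely needs care is the claim that $\mu_{ij}(\lambda)$ is a truly local quantity -- that dualizing the per-link capacity constraint introduces no dependence on multipliers outside $\{i,j\}$ -- which is immediate from \eqref{mu1}. The only residual nuisance is the nonsmoothness of $[\cdot]^+$ at multiplier configurations where a link-commodity pair becomes inactive, where $H$ is to be understood in the one-sided sense and the stated properties persist piecewise, hence wherever $\nabla^2 h$ is defined.
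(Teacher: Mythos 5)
Your proposal is correct, but it reaches both conclusions by a genuinely different route than the paper. For the sparsity you argue structurally: $r_{lm}^k(\lambda)$ and the link multiplier $\mu_{lm}(\lambda)$ from \eqref{mu1} depend only on $\{\lambda_l^k,\lambda_m^k\}_k$, so each $g_i^k$ involves only $\lambda_j$ with $j=i$ or $j\in n_i$, and the off-neighborhood blocks of $H=\nabla^2 h$ vanish. The paper instead differentiates \eqref{primal} entry by entry, producing the explicit formulas \eqref{onon}--\eqref{offoff} in terms of the active sets $\A_{ij}$, and reads the sparsity off the fact that $\A_{ij}=\emptyset$ when $(i,j)\notin\mathcal{E}$. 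For positive semidefiniteness of $H_{ii}$ you invoke convexity of the dual function ($h$ is a pointwise maximum of functions affine in $\lambda$), so $\nabla^2 h\succeq 0$ wherever it exists and every principal submatrix is PSD; the paper instead argues from the sign of $F'$ and a diagonal-dominance criterion applied to its explicit entries. Your argument is shorter and arguably more robust, since it does not depend on getting the combinatorial bookkeeping of \eqref{onon}--\eqref{offoff} right; on the other hand, the explicit entries are not wasted effort in the paper, because they are exactly what Algorithm \ref{add} computes, so the paper's proof doubles as the derivation needed for the implementation. One caution on your cross-check: the representation $H=-A\left(\nabla^2_{rr}\mathcal{L}\right)^{-1}A^{\top}$ with a diagonal middle factor is exact only where no capacity constraint is active; when $\mu_{ij}>0$, differentiating the KKT system produces the additional $\1(\mu_{ij}>0)/|\A_{ij}|$ correction terms visible in \eqref{onon}--\eqref{offoff}, so that identity should not be relied on globally. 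Since you present it only as a sanity check and your primary arguments stand without it, this does not affect correctness.
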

\begin{proof}
%
%
In order to compute the dual Hessian, we begin by computing the optimal flow rates  $r_{ij}^k(\l)$ from the optimal queue priorities as defined in \eqref{primal}.  Substituting into
\begin{equation}\label{eqn_dual_hessian_definition}
\frac{\partial \mathcal{L}(r(\lambda), \lambda) }{\partial \lambda_i^k}=\sum_{j\in n_i} 
       r_{ji}^k(t) - r_{ij}^k(t) - a_i^k
\end{equation}
and differentiating with respect to $\lambda$ we construct the Hessian.  Since $a_i^k$ is a constant the key is differentiating $r_{ij}^k(\lambda)$ with respect to $\l$.  We can differentiate \eqref{primal} using the chain rule yielding
\begin{equation}\label{chain}
\frac{\partial  r_{ij}^k(\lambda)}{\partial \lambda_i^k} = \frac{\partial F_{ij}^k(x)}{\partial x} \frac{\partial}{\partial \l_i^k} \left(-\left[\lambda_i^k-\lambda_j^k - {\mu_{ij}}\right]^+\right)
\end{equation}
The existence of  ${\partial F_{ij}^k(x)}/{\partial x}$ is guaranteed by our assumptions on the edge costs $f_{ij}^k$.  Differentiating $-\left[\lambda_i^k-\lambda_j^k - {\mu_{ij}}\right]^+$ is done by observing that this function becomes a saturated linear function of $r_{ij}^k$ when $r_{ij}^k$ is non-zero and exactly zero when $r_{ij}^k=0$. The slope of the linear function is determined by whether the edge $(i,j)$ is at capacity ($\mu_{ij}>0$).  Discontinuities in $r_{ij}^k$ occur when $r_{ij}^k=0$ and when the projection becomes active, resulting in points from the derivative. We define the derivatives to be zero out these points since they are adjacent to regions where the derivatives are defined and equal to zero. We express the derivative in terms of the active sets
\begin{equation}
\mathcal{A}_{ij}=\{k\in\mathcal{K} : r_{ij}^k(\l)>0\} \label{action}
\end{equation}
for each edge $(i,j)$ and the local variables $\mu_{ij}$ and $\{r_{ij}^k\}$.  Using \eqref{chain} the diagonal elements $H_{ii}$ are defined
\begin{eqnarray}
[H_{ii}]_{kk} &=& \sum_{j\in n_i}  F'(r_{ij}^k)\1(k\in \A_{ij})\left(\frac{\1(\mu_{ij}>0)}{|\A_{ij}|}-1\right)\nonumber\\&+& F'(r_{ji}^k)\1(k\in \A_{ji})\left(\frac{\1(\mu_{ji}>0)}{|\A_{ji}|}-1\right).\label{onon}
\end{eqnarray}
Using same method the off diagonal elements of $H_{ii}$ are computed
\begin{eqnarray}
[H_{ii}]_{ks} &=& \sum_{j\in n_i}  F'(r_{ij}^k)\1(k,s\in \A_{ij})\left(\frac{\1(\mu_{ij}>0)}{|\A_{ij}|}\right)\nonumber\\&+& F'(r_{ji}^k)\1(k,s\in \A_{ji})\left(\frac{\1(\mu_{ji}>0)}{|\A_{ji}|}\right).\label{onoff}
\end{eqnarray}
We observe that $\sum_{j \in n_i} [H_{ij}]_{kk} = [H_{ii}]_{kk}$ from the definitions in \eqref{onon} and \eqref{onoff}. Also, $F'(r_{ij}^k)\le 0$ from {\eqref{F}} and the our assumption that $f_{ij}^k(\cdot)$ is monotone decreasing, therefore the diagonal elements are positive and according to {\cite{NonNegativeMatrices}[Section 6.2]} $H_{ii}$ is positive semidefinite.

We can also compute the off diagonal blocks by differentiating using \eqref{chain}. The diagonal elements of the the off-diagonal blocks $H_{ij}$ are given by
\begin{eqnarray}
[H_{ij}]_{kk} &=& F'(r_{ij}^k)\1(k\in \A_{ij})\left(1-\frac{\1(\mu_{ij}>0)}{|\A_{ij}|}\right)\nonumber\\&+&F'(r_{ji}^k) \1(k\in \A_{ji})\left(1-\frac{\1(\mu_{ji}>0)}{|\A_{ji}|}\right)\label{offon}
\end{eqnarray}
and the off-diagonal elements of the off-diagonal blocks are
\begin{eqnarray}
[H_{ij}]_{ks} &=& -F'(r_{ij}^k)\1(k,s\in \A_{ij})\left(\frac{\1(\mu_{ij}>0)}{|\A_{ij}|}\right)\nonumber\\&-&F'(r_{ji}^k) \1(k,s\in \A_{ji})\left(\frac{\1(\mu_{ji}>0)}{|\A_{ji}|}\right).\label{offoff}
\end{eqnarray}
Observe that the action set $\A_{ij}$ is empty by definition if there is no link $(i,j)\in \mathcal{E}$, therefore $k$ cannot be an element of $\A_{ij}$.  Plugging this fact into the definition of the off diagonal blocks $H_{ij}$ found in equations \eqref{offon} and \eqref{offoff} we conclude that $H_{ij}=\mathbf{0}$ whenever $i\not =j$ and $(i,j), (j,i)\not \in \mathcal{E}.$
\end{proof}

Proposition 2 guarantees that all elements of the Hessian can be computed using local information.   Elements of the Hessian require knowledge of the local action sets $\A_{ij}$ from \eqref{action}, which are computed using the local flow values $\{r_{ij}^k\}_k$.
Furthermore, Proposition 2 gives us positive semi-definiteness of the diagonal blocks $H_{ii}$ indicating that the nodes depend positively on their own queues. This  fits our intuition because we expect penalties on a specific queue to become larger when those queues become larger. Later we will use this fact to show that our splitting matrix $\bar D$ is invertable.

Algorithm \ref{newt} is centralized because it requires the inverse of the Hessian matrix. For comparison with our algorithms we describe node level protocols for implementing Newton's method for the backpressure problem.  At each time instance nodes send their multipliers $\l_i^k(t)$ to their neighbors.  After receiving multiplier information from its neighbors, each node can compute the multiplier differentials $\l_i^k(t) -\l_j^k(t)$ for each edge.  The nodes then solve for $\mu_{ij}$ on each of its outgoing edges by using a rootfinder to solve the local constraint in \eqref{mu1},  The capacity of each edge is then allocated to the commodities via reverse waterfilling as defined in \eqref{primal}. Once the transmission rates are set each node can observe its net packet gain which is equivalent to the stochastic gradient as defined in \eqref{eqn_stoch_subgradient}. The nodes must also compute the matrix of second derivatives, $H_{ij}$ for each of its outgoing edges  according to \eqref{onon}--\eqref{offoff}  which are functions of local transmission rates $\{r_{ij}^k\}_{k, j\in n(i)}$.  In order to compute the Newton direction nodes must then send their sub gradients and $g_i$ and all of their Hessian blocks $H_{ij}$ to \textit{every node} in the Network,  With this information each node can invert the global Hessian matrix and compute its Newton direction $d_i = -\sum_j [H^{-1}]_{ij}g_j$.  Finally, each node updates its multipliers by adding $d_i^k$ to its current multipliers.  In this algorithm the multipliers are no longer equivalent to the queue lengths but can be thought of as "queue priorities" which take into account the queue lengths and the networks ability to handle this queues.
%
%
\subsection{Distributed approximations of Newton steps}
In order to accelerate backpressure and retain its distributed nature we  compute the dual update direction based on the ADD-N algorithm defined in \cite{acc11} which leverages the sparsity of the Hessian to approximate its inverse. The ADD-N algorithm requires a splitting of the Hessian.  We define a block splitting {$H=(D+I)-(B+ I)$ where $D$ is block diagonal defined by
\begin{equation}
D_{ii} = H_{ii} 
\end{equation}
and $B$ is block sparse defined by
\begin{equation}
B= D-H.
\end{equation}}
%
%
The ADD-N method gives us the approximate Hessian inverse
\begin{equation}
H^{-1} \approx \bar H^{(N)} = \sum_{\tau=0}^N \left( (D+  I)^{-1} (B+ I)\right)^\tau (D+ I)^{-1}.
\end{equation}
The diagonal blocks $[D+I]_{ii}$ are positive definite for all $i$ because $D_{ii}$ is positive semi-definite from Proposition \ref{prop}.  We define simplified notation for our splitting
\begin{equation}
H = \bar D - \bar B \label{barsplit}
\end{equation}
where $\bar{D} = D +  I$ and $\bar{B} = B+ I$ giving us the expression
\begin{equation}
\bar H^{(N)} =   \sum_{\tau=0}^N \left(\bar D ^{-1} \bar B\right)^\tau \bar D^{-1}.\label{Hbar}
\end{equation}
By construction, the matrix $\bar H^{(N)}$ is block sparse such that $\bar H^{(N)}_{ij}>\mathbf{0}$ only if node $i$ and $j$ are $N$ or fewer hops apart in $\mathcal{G}$.  We are interested in the fully distributed case where only one hop neighbor information is required. Selecting $N=1$, the dual update direction is computed as
\begin{equation}d_i(t) = -\bar D_{ii}^{-1}g_i(t)  -\sum_{j\in n_i} \bar D_{ii}^{-1} \bar B_{ij}\bar D_{jj}^{-1}g_j(t).\label{lan}\end{equation}
The update direction $d_i^k$ can be computed using only local information acquired by a single exchange of information with neighboring nodes as described in Algorithm \ref{add}.  Each node uses knowledge of the flows entering $\{r_{ji}^k\}_k$ and leaving $\{r_{ij}^k\}_k$ to compute the active sets $\mathcal{A}_{ij}$.  Using the active sets and the flows node $i$ can compute the Hessian submatrix $H_{ij}$ for each of its neighbors $j\in n(i)$. The sub matrix $H_{ii}= -\sum_{j\in n(i)} H_{ij}$. From the Hessian submatrices node $i$ can compute the submatrices of our splitting $\bar B_{ij} = -H_{ij}$, $\bar B_{ii}=I$, $\bar D_{ii}= I + H_{ii}$.  The gradient $\{g_i^k\}_k$ is compute from the flows $\{r_{ij}^k\}_k$ and $\{r_{ji}^k\}_k$.  Node $i$ can invert $\bar D$ locally because it is block diagonal. Node $i$ exchanges its local variables $\bar D^{-1}_{ii}$ and $g_i=\{g_i^k\}_k$ with its neighbors $j\in n(i)$ allowing the computation of the approximate newton direction at node $i$, $d_i$ given by equation \eqref{lan}.
%
\begin{algorithm}{\small
\caption{Local Approximation of Newton's Direction}\label{add}
\For{all neighbors $j\in n(i)$}{
Observe local flows $\{r_{ij}^k\}_k$ and $\{r_{ji}^k\}_k$\\ 
Compute active sets\[ \mathcal{A}_{ij}=\{k\in\mathcal{K} : r_{ij}^k(\l)>0\} \label{action}\]\\
Compute $H_{ij}$ according to \eqref{offon} and \eqref{offoff}.\\
Set $\bar B_{ij}=-H_{ij}$.\\
}
Compute $H_{ii}$ according to  \eqref{onon} and \eqref{onoff}.\\
Compute $\bar D_{ii}^{-1} = (H_{ii}+I)^{-1}$\\
Set $\bar B_{ii}=I$\\
Compute $g_i=\{g_i^k\}_k$ according to 
\[g_i^k=   \sum_{j\in n_i} 
       r_{ji}^k(t) - r_{ij}^k(t) - a_i^k(t)\]\\
\For{all neighbors $j\in n(i)$}{
Communicate $\bar D_{ii}^{-1}$ and $g_i $\\
Receive $\bar D_{jj}^{-1}$ and $g_j $\\
}
Compute $d_i= \{d_i^k\}_k$ according to
\[ d_i = -\bar D_{ii}^{-1} g_i - \sum_{j\in n_i} \bar D_{ii}^{-1} \bar B_{ij} \bar D_{jj}^{-1}g_j \]
}
\end{algorithm}
%
%
The dual update for the accelerated backpressure algorithm is given by
\begin{equation}
\lambda_i(t+1) = \lambda_i(t) - \sum_{j\in n_i} \bar H_{ij} g_j \label{dual}
\end{equation}
where $[\lambda_i]_k = \lambda_i^k$ is the local vector of duals at node $i$ and $\bar H_{ij}$ is the $i,j$ block of the matrix $\bar H$.  The dual updates for variables belonging to node $i$ depend only on the variables $r_{ij}^k$ and $g_i^k$ belonging to nodes $j\in n_i$.  
The accelerated backpressure algorithm is given by
\begin{algorithm} [t] \small{\caption{Accelerated Backpressure for node $i$}
\label{ABPalg}
Observe $q_i^k(0)$.\\  
Initialize $\l_i^k(0) = q_i^k(0)$ for all $k$ and $i\not= dest(k)$\\
\For{$t = 0, 1, 2, \cdots$} {
\For{all neighbors $j\in n(i)$}{ 
Send multipliers $\{\l^k_i(t)\}_{k}$ -- Receive multipliers $\{\l^k_j(t)\}_{k}$\\
Compute $\mu_{ij}$ such that \[\sum_k [q_i^k(t)-q_j^k(t)+\beta_{ij}^k-\mu_{ij}]^+\] \\
Transmit packets at rate \[r_{ij}^{k}(t)= F(- [q_i^k(t)-q_j^k(t)+\beta_{ij}^k-\mu_{ij}]^+) \] \\}
Send variables $\{r^k_{ij}(t)\}_{kj}$ -- Receive variables $\{r^k_{ji}(t)\}_{kj}$\\
Compute stochastic gradient $\{g_i^k(t)\}_k$
\[g_i^k(t) =   \sum_{j\in n_i} 
       r_{ji}^k(t) - r_{ij}^k(t) - a_i^k(t)\]\\
Compute $d_i^k$ by executing \textbf{Algorithm \ref{add}}\\
Update the dual variables 
\[\l_i^k(t+1) = \l_i^k(t) +\epsilon d_i^k(t)\]
}}
\end{algorithm}
Algorithm \ref{ABPalg} works like soft backpressure but the set of queue priorities $\l_i^k(t)$ are not equivalent to the queue lengths $q_i^k$.  The queue priorities are updated using information about not only node $i$'s queue but also the queues at neighboring nodes $j\in n_i$.  The addition information is weighted according the approximate inverse Hessian. The cost of this change is that rather than simply observe the queues as they evolve we must observe the realized flows $r_{ij}^k$ and update the queue priorities according.  This requires one additional exchange of information with neighboring nodes, which appears in Algorithm \ref{add}.  However the slightly increased communication overhead of the accelerated backpressure algorithm is offset by significant improvement in convergence rate which we demonstrate in the following sections.
\section{Stability Analysis}
\label{analysis}
In order to claim the accelerated backpressure algorithm is an alternative to the backpressure and soft backpressure algorithms we need to guarantee that it achieves queue stability.  In this section we leverage results from the the stability analysis of the backpressure algorithm detailed extensively in \cite{crosslayer}.  We begin by proving the stability of the dual variables because they are analogous to queue lengths in the backpressure analysis.
\begin{proposition}
\label{queuestability}
Given a Network $\mathcal{G}$ with packet arrivals $\field{E}[a_i^k(t)]=a_i^k$ and edge capacities $C_{ij}$, routing packets according to Algorithm \ref{ABPalg},  we have stable dual variables
\begin{equation}
\limsup_{t\rightarrow\infty} \frac{1}{t} \sum_{\tau=0}^{t-1} \sum_{i,k} \field{E} \l_i^k(\tau)\le \frac{B}{2\delta} \label{dualstab}
\end{equation}
for each commodity $k$ and node $i\not=dest(k)$, where $B,\delta>0$.
\end{proposition}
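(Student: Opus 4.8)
The plan is to prove \eqref{dualstab} by a Lyapunov drift argument in the style of the backpressure stability analysis of \cite{crosslayer}, with the new ingredient being the preconditioner $\bar H$. I would first rewrite the dual update of Algorithm \ref{ABPalg} — taken to include the projection $[\,\cdot\,]^+$ onto $\field{R}_+$, as in the other descent schemes of the paper, so that $\lambda(t)\ge 0$ for all $t$ — in vector form as $\lambda(t+1)=\big[\lambda(t)-\epsilon\,\bar H(t)\,g(t)\big]^+$, where $g(t)=\{g_i^k(t)\}$ is the stochastic gradient of \eqref{dual} and $\bar H(t)=\bar H^{(1)}$ is the state-dependent approximate inverse Hessian of \eqref{Hbar}, assembled blockwise in Algorithm \ref{add}. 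Three facts should be recorded first. (i) By Proposition \ref{prop} the diagonal blocks satisfy $\bar D_{ii}(t)=I+H_{ii}(t)\succeq I$, so $\bar D(t)$ is invertible; combined with boundedness of $F'(\cdot)$ and of $|\A_{ij}|\le|\ccalK|$, the entries of $H(t)$ and hence of $\bar H(t)$ are uniformly bounded, and the ADD construction of \cite{acc11} makes $\bar H(t)$ symmetric positive definite, so there are constants $0<\underline h\le\bar h<\infty$ with $\underline h I\preceq\bar H(t)\preceq\bar h I$ for all $t$. (ii) Conditioned on the history $\mathcal{F}_t$ (equivalently on $\lambda(t)$), the primal maximizers $r_{ij}^k(\lambda(t))$, and therefore $\bar H(t)$, are deterministic, while the arrivals $a_i^k(t)$ are i.i.d.\ with mean $a_i^k$; hence $\mathbb{E}[g(t)\mid\mathcal{F}_t]=\tilde g(t)$ is the true dual (sub)gradient \eqref{eqn_subgradient} at $\lambda(t)$ and $\mathbb{E}[\|g(t)\|^2\mid\mathcal{F}_t]$ is uniformly bounded, since the $r_{ij}^k$ are capped by the $C_{ij}$ and the arrivals are bounded. (iii) Because the arrival vector lies strictly inside the stability region, there is a routing vector $\hat r$ feasible for the capacity constraints with $\sum_{j\in n_i}\hat r_{ij}^k-\hat r_{ji}^k\ge a_i^k+\delta_0$ for some $\delta_0>0$.

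Next I would take $V(t)=\tfrac12\|\lambda(t)\|^2=\tfrac12\sum_{i,k}(\lambda_i^k(t))^2$, use $\|[x]^+\|\le\|x\|$, and expand
\[
V(t+1)-V(t)\ \le\ -\epsilon\,\lambda(t)^{\!\top}\bar H(t)\,g(t)+\tfrac{\epsilon^2}{2}\|\bar H(t)\,g(t)\|^2 .
\]
Taking $\mathbb{E}[\,\cdot\mid\mathcal{F}_t]$ and using (i)--(ii), the last term is at most a constant $B/2$ and the first becomes $-\epsilon\,\lambda(t)^{\!\top}\bar H(t)\,\tilde g(t)$. The core estimate is then the preconditioned analogue of the standard backpressure inequality: since $r(\lambda(t))$ maximizes the Lagrangian \eqref{eqn_primal_maximizers} over the capacity region, summation by parts and comparison with $\hat r$ give $\sum_{i,k}\lambda_i^k(t)\sum_{j\in n_i}\big(r_{ij}^k(\lambda(t))-r_{ji}^k(\lambda(t))\big)\ge\sum_{i,k}\lambda_i^k(t)(a_i^k+\delta_0)$, using $\lambda(t)\ge0$ and (iii); hence $\lambda(t)^{\!\top}\tilde g(t)\ge\delta_0\sum_{i,k}\lambda_i^k(t)$, which is exactly the bound of \cite{crosslayer} and is untouched by the preconditioner since $\bar H$ affects only the dual step, not the primal maximization. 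It remains to pass from $\lambda^{\!\top}\tilde g$ to $\lambda^{\!\top}\bar H\tilde g$: writing $\bar H(t)\tilde g(t)=c\,\tilde g(t)+(\bar H(t)-cI)\tilde g(t)$, using $\|\lambda\|\le\sum_{i,k}\lambda_i^k$ for $\lambda\ge0$ together with the spectral bounds and the bound on $\|\tilde g\|$, one obtains $\lambda(t)^{\!\top}\bar H(t)\,\tilde g(t)\ge 2\delta\sum_{i,k}\lambda_i^k(t)$ for some $\delta>0$, at least when the ADD-$1$ map stays close enough to a positive multiple $cI$ of the identity — a condition I would quantify in terms of the curvature of the $f_{ij}^k$. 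This yields $\mathbb{E}[V(t+1)-V(t)\mid\mathcal{F}_t]\le B-2\delta\sum_{i,k}\lambda_i^k(t)$.

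Finally I would telescope: taking total expectations and summing over $\tau=0,\dots,t-1$,
\[
\mathbb{E}[V(t)]-V(0)\ \le\ Bt-2\delta\sum_{\tau=0}^{t-1}\sum_{i,k}\mathbb{E}[\lambda_i^k(\tau)];
\]
since $V(t)\ge 0$, dividing by $t$ and letting $t\to\infty$ yields \eqref{dualstab}.

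The step I expect to be the main obstacle is exactly the passage from the (sub)gradient direction to the preconditioned direction in the drift, i.e.\ controlling $\lambda(t)^{\!\top}\bar H(t)\,\tilde g(t)$. Plain backpressure works because the dual increment is (minus) the subgradient itself, so $\lambda^{\!\top}\tilde g\ge\delta_0\sum_{i,k}\lambda_i^k$ feeds straight into the drift; here the increment is the subgradient times a time-varying matrix, and the diagonal blocks of $\bar H^{(1)}$ range roughly between $2I$ (at idle nodes) and $0$ (at saturated ones), so there is no single scalar $c$ for which $\bar H(t)-cI$ is uniformly small and the clean inequality does not transfer for free. The natural remedy, the weighted Lyapunov function $V(t)=\tfrac12\lambda(t)^{\!\top}\bar H(t)^{-1}\lambda(t)$, collapses the cross term to the clean $-\epsilon\,\lambda(t)^{\!\top}\tilde g(t)$, but then the difficulty moves to bounding the perturbation $\tfrac12\lambda(t+1)^{\!\top}\big(\bar H(t+1)^{-1}-\bar H(t)^{-1}\big)\lambda(t+1)$ caused by $\bar H$ changing between steps, which would require exploiting that $\bar H(t)$ is piecewise constant in the active sets $\A_{ij}$ (and so changes only finitely often along a trajectory), or a slowly-varying argument, and which interacts awkwardly with the projection. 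Deciding between absorbing a smallness condition on $\|\bar H-cI\|$ into the constants $B,\delta$ and controlling the drift of $\bar H$ itself is the heart of the proof; the remaining steps are the bookkeeping of \cite{crosslayer}.
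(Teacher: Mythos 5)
Your overall strategy (Lyapunov drift plus the perturbed-arrival lower bound from \cite{crosslayer}) is the right one, but your primary route is not the paper's, and it has a genuine gap at its central step. The paper does not use $V(t)=\tfrac12\|\lambda(t)\|^2$; it uses the weighted function $L(\lambda_t)=\lambda_t'\bar H_t^{-1}\lambda_t$ --- precisely the ``natural remedy'' you name in your last paragraph and then set aside. With that choice the substitution $\lambda_{t+1}-\lambda_t=-\bar H_t g_t$ cancels the preconditioner in the cross term, leaving $-2g_t'\lambda_t$ plus $g_t'\bar H_t g_t$ plus the perturbation $\lambda_{t+1}'\Delta_t\lambda_{t+1}$ with $\Delta_t=\bar H_{t+1}^{-1}-\bar H_t^{-1}$, which the paper bounds by a constant (asserting $x'\Delta_t x\le b$ from boundedness of the flows and of the ADD-$1$ construction). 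It then adds and subtracts $f(r)$ so that the quantity to be lower-bounded is $\mathbb{E}[g_t'\lambda_t+f(r)\mid\mathcal{I}_t]\ge\delta\sum_{i,k}\lambda_i^k(t)$, obtained from \cite{crosslayer}[Theorem 4.5, Corollary 3.9], and closes with the Lyapunov stability lemma of \cite{crosslayer}.

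The gap in your route is the inequality $\lambda(t)^{\top}\bar H(t)\tilde g(t)\ge 2\delta\sum_{i,k}\lambda_i^k(t)$, which is exactly what you cannot get for free, as you yourself observe: $\bar H^{(1)}$ is not uniformly close to a multiple of the identity (its blocks depend on the active sets and on whether links are saturated), and the decomposition $\lambda^{\top}\bar H\tilde g = c\,\lambda^{\top}\tilde g + \lambda^{\top}(\bar H-cI)\tilde g$ cannot be sign-controlled by spectral bounds alone, because the good term is only bounded below by $c\,\delta_0\|\lambda\|_1$ while the error term is of order $\|\bar H-cI\|\,\|\lambda\|\,\|\tilde g\|$ --- the same order --- so the ``closeness to $cI$'' condition you defer is doing all the work and is neither established by you nor needed (or established) by the paper, which sidesteps it entirely through the choice of Lyapunov function. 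A secondary slip: for nonzero $f_{ij}^k$ the Lagrangian maximizer does not maximize $\lambda^{\top}(\cdot)$ alone, so your comparison with $\hat r$ yields the lower bound only after the $f(r)$ correction is carried along (this is why the add-and-subtract step appears in the paper); it is harmless since $f$ is bounded on the capacity region, but it must be stated. To complete the argument, switch to $L(\lambda_t)=\lambda_t'\bar H_t^{-1}\lambda_t$ and supply the bound on the single-step variation $\Delta_t$ --- e.g.\ via your own observation that $\bar H_t$ is uniformly bounded above and below and piecewise constant in the active sets --- which is the one ingredient the paper itself asserts rather than proves.
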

\begin{proof}
Define the Lyapunov function for the dual vector sequence $\l_t = \{\l_i^k(t)\}_{i,k}$ for all $t>0$,
\begin{equation}
L(\l_t) = \l_t' \bar H_t^{-1} \l_t
\end{equation} 
where $H_t=H(\lambda_t)$.  We consider the variation in $\Delta_t = \bar H_{t+1}^{-1} - \bar H_t^{-1}$.  Due to practical constraints on the flow rates $r_{ij}^k(t)$ defined in \eqref{eqn_primal_maximizers}, and the construction of $\bar H_t$ from \eqref{Hbar} with $N=1$, it is clear that there exists an upper bound of the singled iteration change $\Delta_t$.  We will characterize this bound as $x'\Delta_t x \le b$ for all $t$.  Using the definition of $\Delta_t$ we can write the change in the Lyapunov function
\begin{equation}
L(\l_{t+1})-L(\l_t)  = \l_{t+1}' [\bar H_t^{-1}+\Delta_t] \l_{t+1}-\l_t' \bar H_t^{-1} \l_t
\end{equation}
Reorganizing terms we have 
\begin{equation}
L(\l_{t+1})-L(\l_t)  = \l_{t+1}' \Delta_t  \l_{t+1} + (\l_{t+1}-\l_t)' \bar H_t^{-1} (\l_{t+1} + \l_t)
\end{equation}
Applying equation \eqref{dual} to substitute $\l_{t+1}-\l_t = - \bar H_t g_t$ and simplifying,
\begin{equation}
L(\l_{t+1})-L(\l_t)  = \l_{t+1}' \Delta_t  \l_{t+1} + (-g_t)'  (2\l_t-\bar H_t g_t).
\end{equation}
We next add and subtract $f(r) = \sum_{i,j,k} f_{ij}^k(r_{ij}^k)$ in order to make the primal objective from \eqref{eqn_primal_maximizers} appear,
\begin{eqnarray}
L(\l_{t+1})-L(\l_t)  &= & \l_{t+1}' \Delta_t  \l_{t+1} + g_t \bar H_t g_t +2 f(r)'\nonumber\\
&&- 2(g_t'\l_t+f(r))
\end{eqnarray}
where $g_t$ is the gradient as defined in \eqref{eqn_stoch_subgradient}.  Since the flow values lie in bounded set defined by the capacities $g_t \bar H_t g_t$ and $f(r)$ both have finite maximums.  Along with our finite bound $x'\Delta_t x \le b$ we can conclude $\l_{t+1}' \Delta_t  \l_{t+1} + g_t \bar H_t g_t +2 f(r)\le B$ for some $B>0$.   We apply this inequality and take the expectation with respect to $\mathcal{I}_t$ which is the sigma field including all relevant information up to time $t$ including the dual variables, queue lengths and packet transmission rates:
\begin{equation}
\field{E}[L(\l_{t+1})-L(\l_t) |\mathcal{I}_t] \le B - \field{E}[2(g_t'\l_t+f(r))|\mathcal{I}_t].
\end{equation}
We now applying the argument from \cite{crosslayer}[Theorem 4.5] with \cite{crosslayer}[Corollary 3.9], which (to summarize) uses the fact that $r=\{r_{ij}^k\}$ are the primal optimizers and a small perturbation $\delta>0$ to the arrival rates $\{a_i^k\}$ to lower bound $\field{E}[(g_t'\l_t+f(r))|\mathcal{I}_t] \ge \delta \sum_{i,k} \l_i^k(t)$.  Thus we have 
\begin{equation}
\field{E}[L(\l_{t+1})-L(\l_t) |\mathcal{I}_t] \le B - 2\delta \sum_{i,k} \l_i^k(t) \label{pre},
\end{equation}
the necessary condition for the Lyapunov Stability Lemma, \cite{crosslayer}[Lemma 4.1]  which guarantees the desired relation.
\end{proof}
\begin{corollary} \label{cor1}
Stability of the dual variables $\l_i^k(t)$ implies stability of the queues $q_i^t(t)$.
\end{corollary}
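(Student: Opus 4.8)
The plan is to prove, for the physical queue vector $q_t=\{q_i^k(t)\}$, the same kind of Lyapunov drift inequality that Proposition \ref{queuestability} established for the dual vector, and then invoke the Lyapunov Stability Lemma \cite{crosslayer}[Lemma 4.1]. The starting point is that, by \eqref{eqn_queue_evolution} together with the identity $a_i^k(t)+\sum_{j\in n_i}\big(r_{ji}^k(t)-r_{ij}^k(t)\big)=-g_i^k(t)$, where $g_i^k(t)$ is the stochastic gradient of \eqref{eqn_stoch_subgradient} generated by the flows $r_t=r(\lambda_t)$, the queues obey the projected step $q_i^k(t+1)=[\,q_i^k(t)-g_i^k(t)\,]^+$. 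Taking $V(q_t)=\sum_{i,k}(q_i^k(t))^2$ and using $([x]^+)^2\le x^2$ componentwise yields
\begin{equation}
V(q_{t+1})-V(q_t)\ \le\ -2\,q_t'g_t+\|g_t\|^2 .
\end{equation}
Because the flows obey $0\le r_{ij}^k(t)\le C_{ij}$ and the arrivals have finite variance, $\|g_t\|^2$ is bounded by a constant $B_1$, exactly in the way that $g_t'\bar H_t g_t$ and $\lambda_{t+1}'\Delta_t\lambda_{t+1}$ were absorbed into $B$ in the proof of Proposition \ref{queuestability}. Conditioning on the sigma-field $\mathcal{I}_t$,
\begin{equation}
\field{E}\big[\,V(q_{t+1})-V(q_t)\,\big|\,\mathcal{I}_t\,\big]\ \le\ B_1-2\,\field{E}\big[\,q_t'g_t\,\big|\,\mathcal{I}_t\,\big].
\end{equation}

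Everything then hinges on the negative term: I would show $\field{E}[\,q_t'g_t\mid\mathcal{I}_t\,]\ge\delta'\sum_{i,k}q_i^k(t)-B_2$ for constants $\delta',B_2>0$. The natural route recycles the optimality estimate already used in Proposition \ref{queuestability}. Since the flows $r_t$ are the per-link Lagrangian maximizers \eqref{eqn_primal_maximizers} associated with the queue-priority vector $\lambda_t$, the perturbation argument of \cite{crosslayer}[Theorem 4.5] and \cite{crosslayer}[Corollary 3.9] gives $\field{E}[\,g_t'\lambda_t+f(r_t)\mid\mathcal{I}_t\,]\ge\delta\sum_{i,k}\lambda_i^k(t)$, and since $f$ is monotone decreasing over the bounded flow box the term $f(r_t)$ is bounded, so $\field{E}[\,g_t'\lambda_t\mid\mathcal{I}_t\,]\ge\delta\sum_{i,k}\lambda_i^k(t)-B_2'$. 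Decomposing $q_t=\lambda_t+(q_t-\lambda_t)$, the bound I want follows once (i) the cross term $\field{E}[\,(q_t-\lambda_t)'g_t\mid\mathcal{I}_t\,]$ is controlled and (ii) $\sum_{i,k}\lambda_i^k(t)$ is traded for $\sum_{i,k}q_i^k(t)$ up to a constant.

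The main obstacle is exactly the discrepancy $q_t-\lambda_t$ between the physical queues and the queue priorities. In classical backpressure the two coincide (for $\epsilon=1$), so the crosslayer estimate applies verbatim; in Algorithm \ref{ABPalg} the priorities instead receive the approximate Newton correction $\epsilon\,\bar H_t g_t$ rather than the plain gradient step, so the estimate is available at $\lambda_t$ but not, a priori, at $q_t$. I would control the gap by a comparison argument: both recursions are driven by the \emph{same} bounded flow variables and both pass through the order-preserving projection $[\cdot]^+$, while the one-step mismatch of the two updates, $(I-\bar H_t)g_t$, is uniformly bounded because $\|g_t\|$ is bounded and $\bar H_t$ has uniformly bounded norm (its blocks satisfy $\bar D_{ii}=I+H_{ii}\succeq I$ by Proposition \ref{prop}, and $\bar B_{ij}=-H_{ij}$ has bounded entries since $F'$ is bounded by the strong convexity of $f$). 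From this I would aim for a componentwise estimate $q_i^k(t)\le\lambda_i^k(t)+c$ with $c$ constant, which simultaneously bounds the cross term in (i) and supplies the trade in (ii).

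If that uniform pointwise comparison turns out to be too strong, the fallback is to bound $\frac1t\sum_{\tau<t}\sum_{i,k}\field{E}[q_i^k(\tau)]$ directly against $\frac1t\sum_{\tau<t}\sum_{i,k}\field{E}[\lambda_i^k(\tau)]$ and quote \eqref{dualstab} of Proposition \ref{queuestability}, again using that the two processes share their input flows. Either way, once the drift inequality $\field{E}[\,V(q_{t+1})-V(q_t)\mid\mathcal{I}_t\,]\le B'-2\delta'\sum_{i,k}q_i^k(t)$ is in hand, \cite{crosslayer}[Lemma 4.1] closes the proof precisely as in Proposition \ref{queuestability}. I expect the delicate step to be the bookkeeping in controlling $q_t-\lambda_t$; the rest is a direct transcription of the Proposition \ref{queuestability} argument with $q_t$ in place of $\lambda_t$.
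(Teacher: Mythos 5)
Your route is genuinely different from the paper's and, as written, has a gap at exactly the step you flag as delicate. The paper does not attempt a Lyapunov drift on the physical queues at all: it argues that both recursions are driven by the same gradient sequence $g_t$, that $\bar H_t$ has positive definite, block diagonally dominant blocks, and hence that boundedness of the $\l_i^k(t)$ iterates forces the time-averaged gradient to vanish, i.e., the ergodic flow rates satisfy the feasibility condition \eqref{dconstraint}, which is precisely the condition under which the queue recursion \eqref{eqn_queue_evolution} is stable. Your plan instead repeats the drift argument of Proposition \ref{queuestability} with $V(q_t)=\sum_{i,k}(q_i^k(t))^2$; the setup $q_i^k(t+1)=[q_i^k(t)-g_i^k(t)]^+$ and the inequality $V(q_{t+1})-V(q_t)\le -2q_t'g_t+\|g_t\|^2$ are correct, but the crosslayer perturbation bound is only available at $\l_t$ (because $r_t$ maximizes the Lagrangian at $\l_t$, not at $q_t$), so everything rests on controlling $q_t-\l_t$.

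The specific mechanism you propose for that control does not work. Boundedness of the one-step mismatch $(I-\epsilon\bar H_t)g_t$ does not prevent the gap from accumulating linearly in $t$: ignoring projections, $q_t-\l_t=(q_0-\l_0)-\sum_{\tau<t}(I-\epsilon\bar H_\tau)g_\tau$, and for small $\epsilon$ this is essentially $-\sum_{\tau<t}g_\tau$, which is of the same order as $q_t$ itself. So the uniform estimate $q_i^k(t)\le\l_i^k(t)+c$ is false in general, and assuming the gap stays bounded is tantamount to assuming the queues are stable, which is the thing to be proved. The fallback of trading the time average of $\field{E}[q_i^k(\tau)]$ for that of $\field{E}[\l_i^k(\tau)]$ is likewise unsubstantiated: the two processes ``sharing their input flows'' does not by itself relate their time averages. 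To close the argument you need the step the paper actually uses (or an equivalent): deduce from the stability of $\l_t$ and the positive definiteness of $\bar H_t$ that $\limsup_{t}\frac{1}{t}\sum_{\tau<t}\field{E}[g_i^k(\tau)]\le 0$, i.e., that \eqref{dconstraint} holds for the ergodic rates, and then invoke the standard rate-stability argument for \eqref{eqn_queue_evolution}.
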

\begin{proof}
The queues evolve according to \eqref{eqn_queue_evolution} while the duals evolve according to \eqref{dual}.  Both updates are based on the realization of the sequence of subgradients in \eqref{eqn_stoch_subgradient}, the difference being the matrix product $\bar H_t g_t$ appearing in \eqref{dual}.  From Proposition 1, we have the Hessian diagonal blocks $H_{ii}$ are positive definite and $H$ is block diagonally dominant. By the construction in \eqref{Hbar}, we can see $\bar H$ also has positive definite diagonal blocks and block diagonal dominant. Therefore, the sequence $\l_i^k(t)$ only remains stable if \eqref{dconstraint} is satisfied, which is the condition for the stability of the queue $q_i^k(t)$.\end{proof}
Proposition \ref{queuestability} guarantees the stability of the dual variables $\l_i^k(t)$ (queue priorities) which take the role the queues have in BP and SBP for determining routing in the ABP algorithm.  Demonstrating that the $\l_i^k(t)$ sequence is stable tells us that the algorithm itself is stable in the sense that the routing assignments stabilize (because they are an explicit function of $\l_i^k(t)$).  It is Corollary \ref{cor1} which guarantees that the queues themselves remain stable. This is done by observing that the evolution of $\l_i^k(t)$ and $q_i^k(t)$ both evolve based on the sequence of dual gradients $g_i^k(t)$.  Since the ABP algorithm solves the optimization \eqref{bpopt}, the dual gradient $g_i^k(t)$ tends to zero on average so we are not surprised to see that both $\l_i^k(t)$ and $q_i^k(t)$ are stable.
%
%
\section{Numerical Results}
\label{sim}
The ABP algorithm can be demonstrated to outperform backpressure and soft backpressure in numerical experiments.  The following examples are run using the 10 node network shown in Figure \ref{net} with 5 data types. Our choice of objective function is 
\begin{equation}
f_{ij}^k(r_{ij}^k) = -\frac{1}{2} \left(r_{ij}^k\right)^2+ \beta_{ij}^k r_{ij}^k.
\end{equation} 
The quadratic term captures an increasing cost of routing larger quantities of packets across a link and help to eliminate myopic routing choices that lead to sending packets in cycles.  The linear term $\beta$ is introduced to reward sending packets to their destinations. In our simulations, $\beta_{ij}^k=10$ for all edges routing to their respective data type destinations $j=dest(k)$ and all other $i,j,k$, $\beta_{ij}^k=0$.  The Link capacities are select uniformly randomly $[0,100]$. 
\begin{figure}
\includegraphics[width=.75\columnwidth]{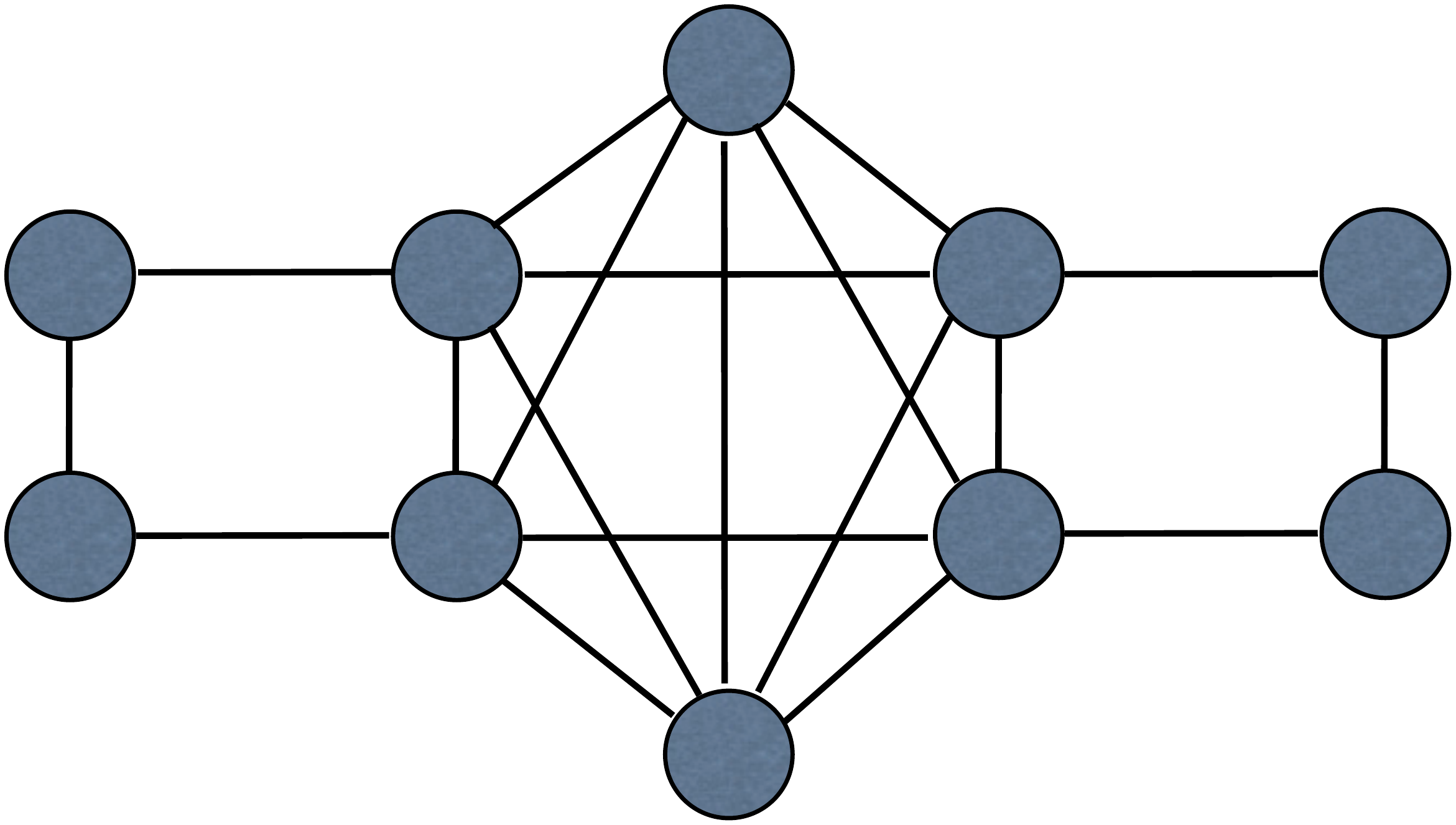}
\centering
\caption{\label{net} The numerical experiments for the ABP algorithm presented in this section are performed on this simple 10 node network with 5 data types. The destinations are unique for each data type and are chosen randomly.}
\end{figure}
%
\begin{figure}
\includegraphics[width=1\columnwidth]{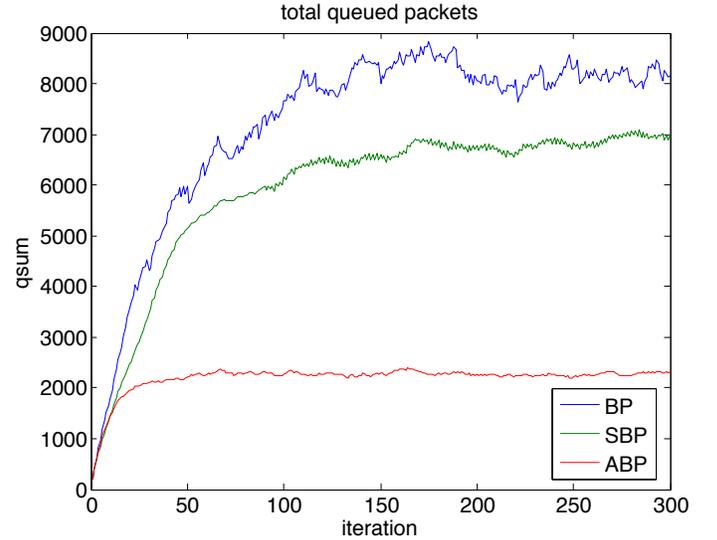}
\centering
\caption{\label{rand} Even when the arrival rates are unknown, the Accelerated Backpressure algorithm stabilizes the queues faster and with smaller queue totals than the Soft Backpressure or the traditional Backpressure algorithms }
\end{figure}
\begin{figure}
\includegraphics[width=1\columnwidth]{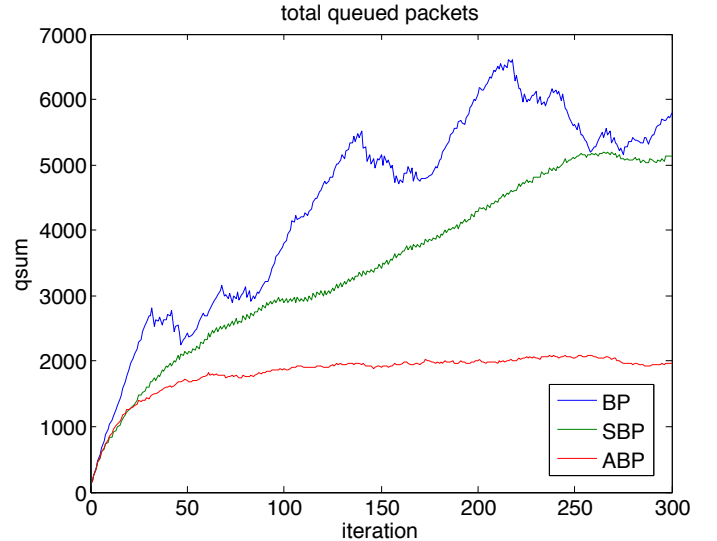}
\centering
\caption{\label{queues}  The variation in the underlying statistics for the packet arrival rates causes the backpressure and soft backpressure algorithm to stabilize much more slowly and in some cases not at all. }
\end{figure}
\begin{figure}
\includegraphics[width=1\columnwidth]{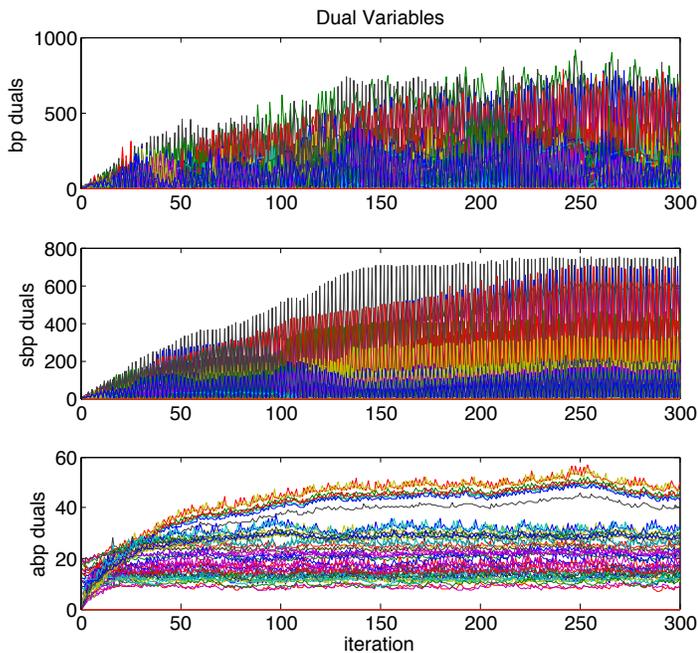}
\centering
\caption{\label{duals}  The Accelerated Backpressure algorithm converges to the optimal dual variables at a rate significantly greater than the backpressure or soft backpressure algorithm, allowing the algorithm to track the optimal duals even when the underlying arrival rates vary. }
\end{figure}
We demonstrate that the ABP algorithm outperforms backpressure and soft backpressure when the arrival rates are stochastic. Numerical experiments are formulated with the the average arrival rates are 5 packets per data type per node.  The nodes do not know that this is the average and therefore use the current realized arrival rate when computing their transmission rates.  As shown in Figure \ref{rand}, the ABP algorithm still stabilizes the queues after about 30 iterations while soft backpressure requires 100 and backpressure requires around 150 iterations but retains the most volatility.  The ABP queues stabilize with around 1000 queued packets while soft backpressure has about 6000 queued packets and the traditional backpressure algorithm has around 8000 queues packets.  In addition to having the smallest queues we observe that the accelerated backpressure algorithm also has the smallest variance in the queues.

In order to demonstrate the effect of having a faster convergence rate we consider the case were the nodes are divided into two equal sized sets. At time $t$ one of the two sets will be actively receiving packets.  The frequency at which we switch between active sets captures how fast the system is varying. The example shown in Figures \ref{duals} and \ref{queues} switches active sets every 10 iterations. Otherwise it is equivalent to the previous examples where 10 nodes were used with the topology shown in Figure \ref{net}.  The key observation in \ref{duals} is that the dual variables are highly volatile for backpressure and soft backpressure.  Recalling that in the case of backpressure and soft backpressure the duals are equivalent to the queue lengths, volatility in the duals is volatility in the queues.  However, accelerated backpressure converges quickly enough that the variations in the underlying statistics at a rate of once every 10 iterations has no significant effect.  From Figure \ref{queues} it is clear that by having much more stable dual variables we are able to compute more effective transmission rate assignments and keep the queues from growing.
\section{conclusion}
In this work we have presented a novel method for computing packrouting in networks based on applying an approximate Newton's method to the backpressure routing problem.  This approach retains the distributed information structure necessary for implementing the algorithm efficiently at the node level. We presented node level protocols and proved that our algorithm stabilizes queues provided the arrival rates and capacities are chosen such that it is possible to stabilize the queues.  In numerical experiments we demonstrate significant improvements in convergence rate leading to much smaller queues and the ability to stabilize queues even when the arrival rate statistic vary.  Our forthcoming work focuses on extending the analysis of the ABP algorithm to explicitly consider the convergence rates.
\bibliographystyle{unsrt}
\bibliography{distributed}
\end{document}